\newtheorem{theorem}{Theorem}[section]
\newtheorem{corollary}[theorem]{Corollary}
\newtheorem{proposition}[theorem]{Proposition}
\newtheorem{lemma}[theorem]{Lemma}
\newtheorem*{theorem*}{Theorem}
\newtheorem*{corollary*}{Corollary}
\theoremstyle{definition}
\newtheorem{definition}[theorem]{Definition}
\newtheorem{example}[theorem]{Example}
\newtheorem{question}[theorem]{Question}
\DeclareMathOperator{\length}{\ell}
\DeclareMathOperator{\pdim}{pd}
\DeclareMathOperator{\tor}{Tor}
\DeclareMathOperator{\ext}{Ext}
\DeclareMathOperator{\depth}{depth}
\DeclareMathOperator{\Hom}{Hom}
\DeclareMathOperator{\supp}{Supp}
\DeclareMathOperator{\grade}{grade}
\title{A Rigidity Theorem for Ext}
\author{Andrew J. Soto Levins}
\date{}
\begin{document}
\maketitle
\begin{abstract}
The goal of this paper is to show that if $R$ is an unramified hypersurface, if $M$ and $N$ are finitely generated $R$ modules, and if $\ext_{R}^{n}(M,N)=0$ for some $n\leq\grade{M}$, then $\ext_{R}^{i}(M,N)=0$ for $i\leq n$. A corollary of this says that $\ext_{R}^{i}(M,M)\neq 0$ for $i\leq\grade{M}$ and $M\neq 0$. These results are related to a question of Jorgensen and results of Dao. 
\end{abstract}

\section{Introduction} 
Throughout this paper all rings are commutative and $(R,\mathfrak{m},k)$ is a Noetherian local ring. The goal of this paper is to prove a rigidity theorem for Ext. 

\begin{definition} Let $M$ be a nonzero finitely generated $R$ module. We say that $M$ is \emph{Ext rigid} if, for all finitely generated $R$ modules $N$, $\ext_{R}^{n}(M,N)=0$ for some $n\leq\grade{M}$ implies $\ext_{R}^{i}(M,N)=0$ for $i\leq n$. (Recall $\grade{M}=\inf\{i|\ext_{R}^{i}(M,R)\neq 0\}$.)
\end{definition}

\begin{theorem*} (Theorem \ref{maintheorem} below) Let $R$ be a regular local ring or an unramified hypersurface and let $M$ be a finitely generated $R$ module. Then $M$ is Ext rigid.
\end{theorem*}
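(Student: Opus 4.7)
The natural approach is induction on $n$. The base case $n=0$ is vacuous: the hypothesis $\ext_R^0(M,N)=0$ is identical to the conclusion $\ext_R^i(M,N)=0$ for $i\le 0$. For the inductive step, assume the statement for $n-1$, and suppose $\ext_R^n(M,N)=0$ with $1\le n\le \grade M$. It then suffices to prove $\ext_R^{n-1}(M,N)=0$, since applying the inductive hypothesis at degree $n-1$ (noting $n-1\le \grade M$) will give $\ext_R^i(M,N)=0$ for all $i\le n-1$, and combined with $\ext_R^n(M,N)=0$ this is the full conclusion.

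The natural tool for reducing the degree is a change-of-rings argument. Since $\grade M\ge 1$, the ideal $\ann M$ contains an $R$-regular element $x$, and by prime avoidance on $\operatorname{Ass}(R)\cup\operatorname{Ass}(N)$ we can arrange for $x$ to be $N$-regular as well, provided $\ann M$ is not contained in any associated prime of $N$, i.e.\ provided $\Hom_R(M,N)=0$. Granted such an $x$, the change-of-rings spectral sequence
\[
\ext^p_{R/(x)}\bigl(M,\,\ext^q_R(R/(x),N)\bigr)\ \Longrightarrow\ \ext^{p+q}_R(M,N)
\]
degenerates (only $q=1$ contributes, yielding $N/xN$) to an isomorphism $\ext^{n-1}_{R/(x)}(M,N/xN)\cong \ext^n_R(M,N)=0$. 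Moreover $\grade_{R/(x)}M=\grade_R M-1\ge n-1$, so one would like to apply the inductive hypothesis over $R/(x)$ to conclude $\ext^i_{R/(x)}(M,N/xN)=0$ for all $i\le n-1$, which translates back to $\ext^{i+1}_R(M,N)=0$ for $i+1\le n$.

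The principal obstacle is that this reduction does not preserve the class of rings: if $R$ is a regular local ring then $R/(x)$ is an unramified hypersurface, so the induction transitions cleanly from the regular case to the hypersurface case; but if $R=S/(f)$ is already an unramified hypersurface, then $R/(x)=S/(f,x)$ is a codimension-two complete intersection, and the induction leaves the hypothesis class. I therefore expect the hypersurface case to require a separate argument, most likely by working upstairs in the regular ring $S$: one exploits $S$-projective resolutions of $M$ and $N$, invokes Lichtenbaum's theorem that $\tor^R$ is rigid for modules over an unramified hypersurface, and converts low-degree Tor vanishing into low-degree Ext vanishing via Auslander's transpose (which supplies identities such as $\operatorname{Tr}(M)\otimes_R N\cong \ext^1_R(M,N)$).

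One must also handle the case $\Hom_R(M,N)\neq 0$, where no element of $\ann M$ is simultaneously $R$- and $N$-regular. Here I would use a nonzero map $\varphi\colon M\to N$ to form a short exact sequence $0\to\operatorname{im}\varphi\to N\to N/\operatorname{im}\varphi\to 0$ (or the analogue on $\ker\varphi$) and induct on an auxiliary invariant, such as the length of $\Hom_R(M,N)$. This is the step I expect to be the most delicate, both because depth data can deteriorate under such a reduction and because the vanishing of $\Hom_R(M,N)$ is itself the $i=0$ case that the theorem must prove.
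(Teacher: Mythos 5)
Your architecture (induct on $n$, kill $\ext^{n-1}$ by cutting down along a regular element in $\ann M$) is not the paper's, and the two places you yourself flag as delicate are genuine gaps that the sketch does not close. First, the hypersurface case. You propose to invoke ``Lichtenbaum's theorem that $\tor^{R}$ is rigid for modules over an unramified hypersurface,'' but there is no such theorem: Tor rigidity fails over hypersurfaces. For example, over $R=k[[x,y]]/(xy)$ with $M=N=R/(x)$ one has $\tor_{2}^{R}(M,N)=0$ but $\tor_{3}^{R}(M,N)\cong k\neq 0$; this failure is exactly why Dao's Theorem \ref{daointro} needs the hypothesis $\theta^{R}(M,N)=0$. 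What the paper actually uses, after completing and writing $R=Q/x$ with $Q$ unramified regular, is the Hochster--Lichtenbaum nonnegativity and vanishing criterion for the partial Euler characteristics $\chi_{j}^{Q}$ of Tor over the \emph{regular} ring $Q$, transported to Ext by the isomorphism $\ext_{R}^{i}(M,N)\cong\tor_{g-i}^{R}(E_{R}(M),N)$ for $i\leq g-1$ (Lemma \ref{exttotor}, valid because $\ext_{R}^{i}(M,R)=0$ for $i<g=\grade M$ makes the dualized truncated resolution acyclic), combined with the standard long exact sequence relating $\ext_{Q}$ and $\ext_{R}$ (Lemmas \ref{analogofLH} and \ref{analogofDAO}). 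Since the Euler-characteristic input needs finite length, the paper first reduces to that case by inducting on $\dim R$ and localizing at nonmaximal primes --- an induction on dimension, not on $n$.

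Second, your reduction requires an element of $\ann M$ regular on $N$, and such an element exists if and only if $\Hom_{R}(M,N)=0$ (since $\operatorname{Ass}\Hom_{R}(M,N)=\supp M\cap\operatorname{Ass}N$). But $\Hom_{R}(M,N)=0$ is precisely the $i=0$ conclusion the theorem must deliver --- it is the content of Corollary \ref{maincorollary} that $\ext_{R}^{i}(M,M)\neq 0$ --- so the case you defer is the heart of the matter, not a boundary case. Your proposed fallback, inducting on $\length\Hom_{R}(M,N)$, is unavailable because $\Hom_{R}(M,N)$ need not have finite length, and the short exact sequence manipulation is not developed far enough to see that it terminates or preserves the hypotheses. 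The paper sidesteps all of this: in the regular case it quotes Jothilingam's lemma ($\ext_{R}^{n}(M,N)=0$ with $n\leq\grade M$ forces $\ext_{R}^{n-1}(M,N)\cong\ext_{R}^{n-1}(M,R)\otimes_{R}N=0$), and in the hypersurface case the $E_{R}(M)$/Euler-characteristic argument never requires a regular element on $N$.
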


Recall that $R$ is an \emph{unramified} local ring if $R$ is equicharacteristic or is mixed characteristic and char($k$)$\not\in\mathfrak{m}^{2}$. Also, $R$ is a \emph{hypersurface} if it is not regular and the $\mathfrak{m}$-adic completion $\hat{R}$ of $R$ is the quotient of a regular local ring by a nonzerodivisor: $\hat{R}\cong Q/x$ where $(Q,\mathfrak{n},k)$ is a regular local ring and $x\in\mathfrak{n}^{2}$ is a nonzerodivisor. We give the following as a corollary to the main theorem:

\begin{corollary*} (Corollary \ref{maincorollary} below) Let $R$ be an unramified hypersurface and let $M$ be a nonzero finitely generated $R$ module. Then $\ext_{R}^{i}(M,M)\neq 0$ for $0\leq i\leq\grade{M}$.
\end{corollary*}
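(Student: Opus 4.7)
The plan is to apply the main theorem (Theorem \ref{maintheorem}) with $N = M$ and contradict the obvious nonvanishing of $\Hom_R(M,M)$. Since the corollary is really just a specialization of the main theorem, there is essentially no obstacle beyond unpacking the definition of Ext rigidity at $N=M$.

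Concretely, I would argue by contradiction. Suppose $\ext_R^n(M,M) = 0$ for some $n$ with $0 \le n \le \grade M$. Since $R$ is an unramified hypersurface, the main theorem says $M$ is Ext rigid, so applying the definition of Ext rigidity to the pair $(M,M)$ yields $\ext_R^i(M,M) = 0$ for every $i \le n$. Taking $i = 0$ gives $\Hom_R(M,M) = 0$. However, $M \ne 0$, so the identity map $\operatorname{id}_M \in \Hom_R(M,M)$ is a nonzero element, producing the contradiction.

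The only subtle point worth flagging is that the definition of Ext rigidity ranges over $n \le \grade M$, so the conclusion we obtain is really an inequality in the correct range; no additional hypothesis on $n$ is needed beyond $n \le \grade M$, which is exactly what the corollary asserts. Thus the proof reduces to one line after invoking Theorem \ref{maintheorem}, and there is no genuine additional difficulty.
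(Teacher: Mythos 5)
Your proposal is correct and is exactly the argument the paper gives: apply Theorem \ref{maintheorem} with $N=M$ and note that Ext rigidity would force $\Hom_R(M,M)=0$, contradicting $M\neq 0$. (The paper's proof additionally remarks that $\pdim_R M=\grade M$ for CM modules of finite projective dimension, but that is only needed for the ``in particular'' clause about Jorgensen's question, not for the statement you were asked to prove.)
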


For a Noetherian local ring $R$ and a finitely generated CM module $M$ of finite projective dimension, a result of Ischebek \cite[Theorem 17.1]{matsumura} gives us $\ext_{R}^{i}(M,R)=0$ for 
$$i<\depth{R}-\dim{M}=\depth{R}-\depth{M}=\pdim_{R}M.$$ 
Since $\ext_{R}^{\pdim_{R}M}(M,R)\neq 0$, we have $\grade{M}=\pdim_{R}M$, so this corollary answers a question of Jorgensen \cite[Question 2.7]{jorgensen} when the module is CM:

\begin{question}[Jorgensen] \label{jorgensenquestion} Let $M$ be a nonzero finitely generated module of finite projective dimension over a complete intersection $R$ of positive codimension. Does $\ext_{R}^{n}(M,M)=0$ for some $n\geq 1$ imply that $\pdim_{R}M<n$? 
\end{question}

A result of Jothilingam says that for a finitely generated module over a regular local ring $R$, $\ext_{R}^{n}(M,M)=0$ implies that $\pdim_{R}M<n$ (this can be found in \cite{jothilingam}). In \cite{jorgensen} Jorgensen gives an extension of this result and in \cite[Proposition 5.4]{dao2} Dao shows:

\begin{proposition}[Dao] Let $R$ be an unramified hypersurface and let $M$ be a finitely generated $R$ module. Then the answer to Jorgensen's Question \ref{jorgensenquestion} is ``yes" provided $[M]=0$ in $\overline{G}(R)_{\mathbb{Q}}$. 
\end{proposition}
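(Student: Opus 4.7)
The plan is to leverage Hochster's theta pairing $\theta^R$ together with its compatibility, due to Dao, with the reduced Grothendieck group. After passing to the completion so that $\hat{R} = Q/(f)$ with $Q$ regular local and $f\in\mathfrak{n}^{2}$ a nonzerodivisor, Dao's theorem that $\theta^R$ factors through $\overline{G}(R)_{\mathbb{Q}}$ tells us that the hypothesis $[M] = 0$ forces $\theta^R(M,N)=0$ for every finitely generated $N$ for which the pairing is defined (after a standard support reduction ensuring the higher $\tor$ modules have finite length).

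Next I would invoke Dao's rigidity theorem for Tor over an unramified hypersurface: if $\theta^R(M,N)=0$ and $\tor_n^R(M,N)=0$ for some $n$, then $\tor_i^R(M,N)=0$ for all $i \geq n$. To bring this to bear on the Ext hypothesis $\ext_R^n(M,M)=0$, I would convert Ext into Tor by dualizing against a high syzygy: over a hypersurface the high syzygies of $M$ are maximal Cohen--Macaulay, so $\ext_R^n(M,M)$ can be computed as $\tor_j^R(M, \Omega^{s} M^{\ast})$ for suitable shifts $s$ and $j$. The short exact sequence $0 \to \Omega M \to R^{r} \to M \to 0$ yields $[\Omega M] = -[M]$ in $\overline{G}(R)_{\mathbb{Q}}$, so the class of every syzygy of $M$ (and of its $R$-dual) still vanishes; Dao's Tor-rigidity then propagates the vanishing to give $\ext_R^i(M,M)=0$ for all $i \geq n$.

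Finally, minimality of a finite free resolution of $M$ combined with Nakayama shows that if $\pdim_R M = p$ then $\ext_R^p(M,M) \neq 0$: the last differential has entries in $\mathfrak{m}$, so $\ext_R^p(M,M)$ surjects onto the nonzero module $(M/\mathfrak{m} M)^{r_p}$. Combined with the propagated vanishing, this forces $p < n$, which is exactly the content of Jorgensen's question.

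The main obstacle I anticipate is the Ext-to-Tor translation: one has to verify that the companion module used in the dualization remains theta-trivial, and that the identification between $\ext_R^n(M,M)$ and a single $\tor$ group (rather than a page of a change-of-rings spectral sequence) really goes through cleanly over the hypersurface. A cleaner alternative would be to develop an Ext-version of the Hochster--Dao rigidity theorem directly from a complete resolution of $M$, using the symmetry of $\theta^R$ together with Hom-tensor interchange on totally acyclic complexes.
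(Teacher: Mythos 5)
First, a point of comparison: the paper does not prove this Proposition at all --- it is quoted from Dao \cite[Proposition 5.4]{dao2} as background motivation --- so there is no internal proof to measure your sketch against; I can only assess it on its own terms. Your first and last moves are sound: $\theta^R(M,-)$ is additive on short exact sequences and kills $[R]$, so it descends to $\overline{G}(R)_{\mathbb{Q}}$ and vanishes when $[M]=0$ (the finite-length proviso being handled by the same localize-at-nonmaximal-primes-and-induct-on-dimension device used in the proof of Theorem \ref{maintheorem}); and the Nakayama argument showing $\ext_R^p(M,M)\neq 0$ when $p=\pdim_RM<\infty$ is correct and standard.

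The genuine gap is the step you yourself flagged as the ``main obstacle'': the Ext-to-Tor translation, and it does not merely need checking --- it fails as proposed. To rewrite $\ext_R^n(M,M)$ as $\ext_R^{n-s}(\Omega^sM,M)$ you need $s\le n-1$, whereas $\Omega^sM$ is maximal Cohen--Macaulay (indeed free) only for $s\ge\depth R-\depth M=\pdim_RM$, by Auslander--Buchsbaum; these constraints are compatible only when $n>\pdim_RM$, which is exactly the range where there is nothing to prove. Equivalently, the matrix-factorization/complete-resolution dualities compute stable (co)homology, which vanishes identically for modules of finite projective dimension. The correct bridge is Jothilingam's lemma via the Auslander transpose: $\ext_R^n(M,N)=0$ forces $\tor_2^R(\operatorname{Tr}\Omega^{n-1}M,N)=0$, and one then needs Tor-rigidity of $\operatorname{Tr}\Omega^{n-1}M$, not of $M$. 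But from $0\to X^*\to F_0^*\to F_1^*\to\operatorname{Tr}X\to 0$ one gets $[\operatorname{Tr}\Omega^{n-1}M]=[(\Omega^{n-1}M)^*]$ in $\overline{G}(R)$, and dualization does not factor through the Grothendieck group, so your claim that the class ``of its $R$-dual'' still vanishes is unjustified --- this is precisely where the hypothesis $[M]=0$ must be put to work and where the substance of Dao's argument lies. A sanity check that something has to break: if $\pdim_RM<\infty$ then $[M]$ already equals the alternating sum of the classes of the free modules in a finite resolution, hence $[M]=0$ in $\overline{G}(R)$ automatically; so if your sketch closed as written it would answer Question \ref{jorgensenquestion} affirmatively for every unramified hypersurface with no side condition, which is strictly stronger than anything claimed by Dao or by this paper.
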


Recall that $G(R)$ is the Grothendieck group of finitely generated modules over $R$ and $\overline{G}(R)_{\mathbb{Q}}=\overline{G}(R)\otimes_{\mathbb{Z}}\mathbb{Q}$ where $\overline{G}(R)=G(R)/\mathbb{Z}\cdot[R]$ is the reduced Grothendieck group. Note that in our corollary we do not need the module to be zero in $\overline{G}(R)_{\mathbb{Q}}$.\newline

Our main theorem is motivated by the following result of Dao \cite[Proposition 2.8]{dao}:

\begin{theorem}[Dao] \label{daointro} Let $R$ be an unramified hypersurface and let $M$ and $N$ be finitely generated $R$ modules so that $\length(\tor_{i}^{R}(M,N))<\infty$ for $i\gg 0$. If $\theta^{R}(M,N)=0$, then $(M,N)$ is Tor rigid.
\end{theorem}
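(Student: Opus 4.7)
The plan is to combine three ingredients: (i) a reduction to the complete case, using the presentation $R = Q/(x)$ for an unramified regular local ring $Q$; (ii) the change-of-rings long exact sequence for the hypersurface, which yields the eventual 2-periodicity of $\tor^R$ and makes $\theta^R$ a difference of lengths of even and odd Tor in high degree; and (iii) Auslander's classical Tor-rigidity theorem over the unramified regular ring $Q$. The $\theta$-hypothesis is what bridges the even and odd parities of Tor, so that vanishing in one parity forces vanishing in the other.

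First I would pass to the completion of $R$: since faithful flatness preserves Tor, length, and the invariant $\theta$, one may assume $R = Q/(x)$ where $(Q,\mathfrak{n},k)$ is unramified regular and $x\in\mathfrak{n}^{2}$ is a nonzerodivisor. Then, using the two-term resolution of $R$ over $Q$ (or the Shamash construction), one obtains the long exact sequence
\[
\cdots \to \tor_i^Q(M,N) \to \tor_i^R(M,N) \to \tor_{i-2}^R(M,N) \to \tor_{i-1}^Q(M,N) \to \cdots .
\]
Since $Q$ is regular, $\tor_i^Q(M,N) = 0$ for $i > \pdim_{Q}M$, so $\tor_i^R(M,N) \cong \tor_{i-2}^R(M,N)$ once $i$ is large. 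This eventual $2$-periodicity, combined with the finite-length hypothesis, is precisely what makes $\theta^R(M,N)$ a well-defined integer equal to $\length(\tor_i^R(M,N)) - \length(\tor_{i+1}^R(M,N))$ for all $i \gg 0$.

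Now assume $\tor_n^R(M,N) = 0$; the goal is to derive $\tor_i^R(M,N) = 0$ for all $i \geq n$. I would first push $n$ into the periodic range, using the change-of-rings sequence to move the vanishing forward if necessary. Inside the periodic range, $2$-periodicity promotes $\tor_n^R(M,N) = 0$ to vanishing of $\tor_i^R(M,N)$ for all $i$ of the same parity as $n$ with $i \geq n$ in high degree. The equation $\theta^R(M,N) = 0$, together with finite length, then forces the other parity to vanish in high degree as well. Thus $\tor_i^R(M,N) = 0$ for all $i$ above some threshold $N_0$.

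To descend this vanishing back down to degree $n$, I would exploit Auslander's rigidity theorem over the unramified regular ring $Q$: vanishing of $\tor^R$ in a long interval of high degrees forces $\tor^Q$ to vanish in that range through the change-of-rings sequence, and Auslander rigidity then propagates $\tor_j^Q(M,N) = 0$ down to some (possibly smaller) threshold. Feeding this back into the exact sequence allows one to induct downward, establishing $\tor_j^R(M,N) = 0$ for all $j \geq n$. The main obstacle is exactly this downward induction: each step requires that the previously obtained vanishing of $\tor^R$ combine with vanishing of $\tor^Q$ to kill the next $\tor^R$, and one must be careful at the boundary where $\tor_i^Q$ can be nonzero (i.e., $i \leq \pdim_{Q}M$) to use the hypothesis $\tor_n^R(M,N) = 0$ as an anchor rather than losing the chain of vanishings.
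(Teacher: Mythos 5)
This statement is quoted from Dao \cite[Proposition 2.8]{dao} as motivation; the paper gives no proof of it, so the only fair comparison is with Dao's actual argument and with the closely parallel Ext argument the paper does carry out (Lemma \ref{analogofDAO} together with the induction on dimension in Theorem \ref{maintheorem}). Measured against either, your proposal has two genuine gaps. First, your step ``push $n$ into the periodic range'' does not work: the eventual $2$-periodicity $\tor_i^R(M,N)\cong\tor_{i-2}^R(M,N)$ only holds once $\tor_i^Q(M,N)=\tor_{i-1}^Q(M,N)=0$, i.e.\ for $i>\pdim_Q M+1$, and if $n$ is below that range the change-of-rings sequence gives you only $\tor_{n+2}^R\to\tor_n^R=0$ and an injection $\tor_{n+1}^Q\hookrightarrow\tor_{n+1}^R$ --- neither of which moves the vanishing upward. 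Second, and more seriously, your descent mechanism is backwards: Auslander's rigidity over $Q$ propagates vanishing of $\tor^Q$ \emph{upward} in degree, never downward. Vanishing of $\tor_i^Q(M,N)$ for $i\gg 0$ is automatic (finite projective dimension over the regular ring $Q$) and carries no information, so it cannot be ``fed back'' to kill $\tor_j^R$ in lower degrees; if such a descent were possible it would force all positive-degree Tor over $Q$ to vanish for every pair of modules.

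The missing engine is the Hochster--Lichtenbaum theorem on partial Euler characteristics (Theorem \ref{lichtenbaumhochster} in this paper): $\chi_j^Q(M,N)\geq 0$, with equality if and only if $\tor_i^Q(M,N)=0$ for all $i\geq j$. Dao's proof takes alternating sums of lengths along the change-of-rings long exact sequence to show that $\tor_n^R(M,N)=0$ together with $\theta^R(M,N)=0$ forces $\chi_{n+1}^Q(M,N)=0$; the equality case then kills $\tor_i^Q(M,N)$ for all $i\geq n+1$, which is exactly what makes the periodicity isomorphisms start at degree $n$ rather than only in high degrees, and the vanishing of the opposite parity then follows from $\theta^R=0$. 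This theorem is also where the unramified hypothesis enters, which your sketch never uses. Finally, to even form these Euler characteristics near degree $n$ one needs $\length(\tor_i^R(M,N))<\infty$ for all $i\geq n$, not just $i\gg 0$; both Dao and this paper obtain that by localizing at nonmaximal primes (where the high Tor modules vanish, so $\theta^{R_p}=0$ trivially) and inducting on $\dim R$ --- a reduction absent from your proposal.
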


For a finitely generated module $M$, we say that $M$ is \emph{Tor rigid} if, for all finitely generated $R$ modules $N$, $\tor_{n}^{R}(M,N)=0$ for some $n\geq 0$ implies $\tor_{i}^{R}(M,N)=0$ for all $i\geq n$. For a hypersurface $R$ and finitely generated $R$ modules $M$ and $N$ with $\length(\tor_{i}^{R}(M,N))<\infty$ for $i\gg 0$, define $\theta^{R}(M,N)$ to be
$$\theta^{R}(M,N)=\length(\tor_{2j}^{R}(M,N))-\length(\tor_{2j+1}^{R}(M,N))$$
for $j\gg 0$. Results of Auslander \cite{auslander} and of Lichtenbaum \cite{lichtenbaum} show that every module over a regular local ring is Tor rigid. Auslander originally used this fact to study torsion free modules. In \cite{hochster} Hochster defined the pairing $\theta^{R}$. Note that unlike Dao's theorem, the $\theta^{R}$ pairing is not used in our main theorem and we do not need $\length(\ext_{R}^{i}(M,N))<\infty$ for $i\gg 0$.\newline

I would like to thank my advisor Mark Walker for his continuous support and  patience. His comments enormously contributed to the flow of the proofs and the paper. I would also like to thank David Jorgensen for his comments on an earlier draft of this paper.

\section{The Main Theorem and its Corollary}
In this section we prove the main theorem and its corollary. We prove the main theorem in two steps. We first show that it is true when $\ext_{R}^{i}(M,N)$ has finite length for all $i$ and then prove the general case by reducing to the finite length case. We start by giving a definition and a lemma:

\begin{definition} Let $M$ be a finitely generated $R$ module and let $g=\grade{M}$. If
$$\dots\xrightarrow{\partial_{2}}F_{1}\xrightarrow{\partial_{1}}F_{0}\xrightarrow{\partial_{0}}0$$
is a minimal free resolution of $M$ over $R$, define $E_{R}(M)$ to be
$$E_{R}(M)=\text{coker}(F_{g-1}^{*}\xrightarrow{\partial_{g}^{*}}F_{g}^{*}),$$
where $F_{i}^{*}=\Hom_{R}(F_{i},R)$.
\end{definition}

\begin{lemma} \label{exttotor} Let $M$ be a finitely generated $R$ module and let $g=\grade{M}$. If $N$ is an $R$ module, then
$$\ext_{R}^{i}(M,N)\cong\tor_{g-i}^{R}(E_{R}(M),N)$$
for $0\leq i\leq g-1$.
\end{lemma}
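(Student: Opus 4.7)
The plan is to exploit the vanishing $\ext_R^i(M,R)=0$ for $i<g$ provided by the definition of $g=\grade M$. Let $F_\bullet\to M\to 0$ denote the given minimal free resolution. Dualizing against $R$ produces the complex $F_0^*\to F_1^*\to\cdots\to F_{g-1}^*\to F_g^*\to\cdots$, whose $i$-th cohomology is $\ext_R^i(M,R)$. These cohomologies vanish for $0\le i\le g-1$, and the definition $E_R(M)=\mathrm{coker}(\partial_g^*)$ provides the missing cokernel on the right, so one obtains an exact sequence
\[
0\to F_0^*\to F_1^*\to\cdots\to F_{g-1}^*\to F_g^*\to E_R(M)\to 0.
\]
Re-indexing via $P_j:=F_{g-j}^*$ and removing $E_R(M)$ yields a length-$g$ free resolution $P_\bullet\to E_R(M)\to 0$.

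Using this resolution, I would compute $\tor_{g-i}^R(E_R(M),N)$ as the $(g-i)$-th homology of $P_\bullet\otimes_R N$, which under the re-indexing $P_j=F_{g-j}^*$ is exactly the $i$-th cohomology of $F_\bullet^*\otimes_R N$. Because each $F_i$ is a finitely generated free module, the natural map $F_i^*\otimes_R N\to\Hom_R(F_i,N)$ is an isomorphism and assembles into an isomorphism of cochain complexes $F_\bullet^*\otimes_R N\cong\Hom_R(F_\bullet,N)$. Taking cohomology identifies $H^i(F_\bullet^*\otimes_R N)$ with $\ext_R^i(M,N)$, yielding the stated isomorphism.

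The main step is the construction of the free resolution of $E_R(M)$; once that is available, the identification of Tor with Ext is a formal consequence of the adjunction $F^*\otimes_R N\cong\Hom_R(F,N)$ for $F$ finitely generated free. I expect no real obstacles beyond bookkeeping with the reversed indexing. The upper bound $i\le g-1$ in the statement reflects the fact that at $i=g$ the resolution $P_\bullet$ only computes $\tor_0^R(E_R(M),N)=E_R(M)\otimes_R N$, which is a cokernel rather than the kernel modulo image needed to recover $\ext_R^g(M,N)$, so the isomorphism genuinely fails at the top degree.
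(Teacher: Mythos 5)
Your argument is correct and follows essentially the same route as the paper: use the vanishing of $\ext_R^i(M,R)$ for $i<g$ to see that the dualized truncation $0\to F_0^*\to\cdots\to F_g^*\to E_R(M)\to 0$ is a free resolution of $E_R(M)$, then identify $F^*\otimes_R N\cong\Hom_R(F,N)$ termwise. Your closing remark about why the identification fails at $i=g$ is a nice touch not made explicit in the paper.
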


\begin{proof}
Let 
$$F_{\bullet}=(\dots\rightarrow F_{1}\rightarrow F_{0}\rightarrow 0)$$
be a minimal free resolution of $M$ over $R$ and consider
$$F_{\leq g}=(0\rightarrow F_{g}\rightarrow\dots\rightarrow F_{0}\rightarrow 0).$$
Since $g=\grade{M}$, we have $\ext_{R}^{i}(M,R)=0$ for $0\leq i\leq g-1$, so
$$F_{\leq g}^{*}=(0\rightarrow F_{0}^{*}\rightarrow\dots\rightarrow F_{d}^{*}\rightarrow 0)$$
is a minimal free resolution of $E_{R}(M)$ over $R$. The result follows from the fact that $\Hom_{R}(F_{\leq g},N)$ and $F_{\leq g}^{*}\otimes_{R}N$ are isomorphic.
\end{proof}

Recall the following definition:

\begin{definition} Fix $j$. Let $Q$ be a ring and let $M$ and $N$ be $Q$ modules with $\length(\tor_{i}^{Q}(M,N))<\infty$ for $i\geq j$. If $\tor_{i}^{Q}(M,N)=0$ for $i\gg 0$, then define $\chi_{j}^{Q}(M,N)$ to be: 
$$\chi_{j}^{Q}(M,N)=\sum_{i\geq j}(-1)^{i-j}\length(\tor_{i}^{Q}(M,N)).$$
\end{definition}

The following theorem was proven by Hochster and Lichtenbaum. It was proved by Lichtenbaum in \cite{lichtenbaum} in most cases. The rest of the cases were proved by Hochster in \cite{hochster2}.

\begin{theorem}[Hochster and Lichtenbaum] \label{lichtenbaumhochster} Let $(Q,\mathfrak{n},k)$ be an unramified regular local ring and let $M$ and $N$ be finitely generated $Q$ modules. Let $j\geq0$ be an integer and  assume that $\length(\tor_{i}^{Q}(M,N))<\infty$ for $i\geq j$. Then the following hold:\newline

1. $\chi_{j}^{Q}(M,N)\geq0$.\newline

2. If $j\geq 1$, then $\chi_{j}^{Q}(M,N)=0$ if and only if $\tor_{i}^{Q}(M,N)=0$ for all $i\geq j$.
\end{theorem}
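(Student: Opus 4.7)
The plan is to reduce, via the classical ``reduction to the diagonal,'' to a statement about partial Euler characteristics of Koszul homology, which one then proves by induction on the length of the regular sequence.

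First I would pass to the completion: since $Q \to \hat Q$ is faithfully flat and preserves both $\tor$ and length of finite-length modules, it suffices to prove the theorem for $\hat Q, \hat M, \hat N$. By Cohen's structure theorem, together with the hypothesis that $Q$ is unramified (equivalently $p \notin \mathfrak n^2$ in the mixed case), $\hat Q$ is a power series ring $B[[x_1,\ldots,x_n]]$ where $B$ is either the residue field $k$ or a complete unramified DVR $V$ of mixed characteristic with uniformizer $p$.

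Next I would apply reduction to the diagonal. The completed tensor product $A = \hat Q \hat\otimes_B \hat Q \cong B[[\mathbf x, \mathbf y]]$ is regular local, and the diagonal map $A \twoheadrightarrow \hat Q$ is the quotient by the regular sequence $\mathbf t = (x_i - y_i)_{i=1}^{n}$, so the Koszul complex $K_\bullet(\mathbf t)$ resolves $\hat Q$ over $A$. When $B$ is a field, $\hat M \hat\otimes_B \hat N$ is $B$-flat and a standard spectral sequence collapse yields
$$\tor_i^{\hat Q}(\hat M, \hat N) \;\cong\; H_i\bigl(K_\bullet(\mathbf t) \otimes_A (\hat M \hat\otimes_B \hat N)\bigr).$$
This reduces the theorem to the following Koszul-complex statement: for a local ring $A$, a finitely generated $A$-module $P$, and elements $f_1,\ldots,f_n \in A$ such that $H_i(K_\bullet(\mathbf f)\otimes P)$ has finite length for $i \geq j$, the partial Euler characteristic $\sum_{i\geq j}(-1)^{i-j}\length H_i$ is non-negative, and (when $j \geq 1$) vanishes iff all such $H_i$ vanish. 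This one proves by induction on $n$: factoring $K_\bullet(\mathbf f) = K_\bullet(f_1,\ldots,f_{n-1}) \otimes K_\bullet(f_n)$ produces short exact sequences
$$0 \to H_i(K')/f_n H_i(K') \to H_i(K_\bullet(\mathbf f)\otimes P) \to (0 :_{H_{i-1}(K')} f_n) \to 0,$$
where $K' = K_\bullet(f_1,\ldots,f_{n-1})\otimes P$; summing with alternating signs, together with the identity $\length(M/f_nM) = \length(0:_M f_n)$ for finite-length $M$, telescopes the problem onto two partial Euler characteristics of $K'$ that are controlled by the inductive hypothesis.

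The main obstacle is the mixed-characteristic case, where $B = V$ is a DVR rather than a field. Then $\hat M \hat\otimes_V \hat N$ need not be $V$-flat, so the isomorphism above is replaced by a $\tor^V$ spectral sequence producing extra contributions from the $p$-torsion. Propagating non-negativity and the equality characterization through this extra layer is the delicate step and is precisely Hochster's contribution beyond Lichtenbaum. A natural approach is to filter $\hat M$ and $\hat N$ by their $p$-torsion submodules and $p$-torsion-free quotients, reducing to cases in which either one factor is annihilated by $p$ (so lives over the equicharacteristic ring $\hat Q/p$, where the field case applies directly) or both factors are $V$-flat (so the spectral sequence collapses), and then combining the resulting inequalities while verifying that cancellations do not destroy non-negativity or the strictness needed for the equality statement.
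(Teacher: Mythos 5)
The paper offers no proof of this statement---it is quoted from Lichtenbaum and Hochster with a citation---so your proposal has to be judged as a reconstruction of the literature. Your overall frame (complete, apply Cohen's structure theorem, reduce to the diagonal, and study partial Euler characteristics of Koszul homology) is indeed the classical one, and the isomorphism $\tor_i^{\hat Q}(\hat M,\hat N)\cong H_i(K_\bullet(\mathbf t)\otimes_A(\hat M\hat\otimes_B\hat N))$ in the equicharacteristic case is correct. But the inductive step on the Koszul complex has a genuine gap. Your telescoping rests on the identity $\length(M/f_nM)=\length(0:_Mf_n)$ applied to $M=H_i(K')$, which requires those modules to have finite length for $i\geq j$. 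The hypothesis only controls $H_i(K_\bullet(\mathbf f)\otimes P)$ for $i\geq j$, and $\supp H_i(K')$ lies in $V(f_1,\dots,f_{n-1})\cap\supp P$, which is in general strictly larger than $V(\mathbf f)\cap\supp P$; finite length of the $H_i(K')$ simply does not follow. The failure is not cosmetic: for $P=A=k[[x,y]]$, $\mathbf f=(x,y)$, $j=0$, one has $H_0(K')=A/x$ of infinite length, $\length(H_0(K')/yH_0(K'))=1\neq 0=\length(0:_{H_0(K')}y)$, and your telescoped formula would return $\chi_0=0$ while the true value is $1$. Moreover, even where the telescoping is legitimate it yields $\chi_j=\length(0:_{H_{j-1}(K')}f_n)$, and the vanishing of this quantity says only that $f_n$ is a nonzerodivisor on $H_{j-1}(K')$; it does not force $H_j(K')=0$, hence does not give $H_j(K)=0$. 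To close part 2 you need an extra rigidity input---depth sensitivity of Koszul homology (the nonvanishing degrees form an interval $[0,n-\depth]$) or, back in the Tor setting, the Auslander--Lichtenbaum rigidity theorem---which your sketch never invokes. Lichtenbaum's actual argument leans on exactly these tools together with a localization and d\'evissage, not on a bare telescoping.

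The mixed characteristic case is a second, larger gap. You correctly identify it as the delicate point, but the proposed $p$-torsion filtration is not an argument: partial Euler characteristics are not additive across short exact sequences of modules, because the long exact sequence of Tor contributes a connecting term in degree $j-1$ that breaks both the inequality $\chi_j\geq 0$ and the characterization of equality, and the finite-length hypothesis need not pass to the filtration pieces. Controlling precisely these correction terms is the content of Hochster's 1984 paper (the ``rest of the cases'' the present paper alludes to), and it cannot be dismissed as bookkeeping. So while your proposal identifies the right landmarks, it does not yet constitute a proof of either part of the theorem.
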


We make the following definition and prove an analog of the above theorem for Ext.

\begin{definition} Let $Q$ be a ring and let $M$ and $N$ be $Q$ modules. Let $j\geq 0$ be an integer and assume that $\length(\ext_{Q}^{i}(M,N))<\infty$ for $i\leq j$. Define $\overline{\xi}_{j}^{Q}(M,N)$ to be

$$\overline{\xi}_{j}^{Q}(M,N)=\sum_{i=0}^{j}(-1)^{-i}\length(\ext_{Q}^{j-i}(M,N)).$$
\end{definition}

\begin{lemma} \label{analogofLH} Let $(Q,\mathfrak{n},k)$ be an unramified regular local ring and let $M$ and $N$ be finitely generated $Q$ modules. Let $i$ be an integer so that $1\leq i\leq\grade{M}-1$ and assume that $\length(\ext_{Q}^{j}(M,N))<\infty$ for $j\leq i$. Then the following hold:\newline

1. $\overline{\xi}_{i}^{Q}(M,N)\geq 0$.\newline

2. $\overline{\xi}_{i}^{Q}(M,N)=0$ if and only if $\ext_{Q}^{j}(M,N)=0$ for $j\leq i$.
\end{lemma}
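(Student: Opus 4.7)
The plan is to reduce the statement to the Hochster--Lichtenbaum theorem (Theorem~\ref{lichtenbaumhochster}) via the Ext-to-Tor isomorphism of Lemma~\ref{exttotor}. Set $g = \grade{M}$. By Lemma~\ref{exttotor} we have a natural isomorphism
$$\ext_Q^j(M,N) \cong \tor_{g-j}^Q(E_Q(M), N) \qquad \text{for } 0 \le j \le g-1,$$
and since $i \le g-1$ this isomorphism covers every Ext term appearing in the sum defining $\overline{\xi}_i^Q(M,N)$. Moreover, $Q$ is regular, so the minimal free resolution $F_{\le g}^*$ of $E_Q(M)$ produced in the proof of Lemma~\ref{exttotor} has length exactly $g$; hence $\tor_m^Q(E_Q(M), N) = 0$ for every $m > g$, without any hypothesis on $N$.

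Next I would unwind the sign conventions. Substituting $m = g-j$ in the defining sum for $\overline{\xi}_i^Q(M,N)$ and using the isomorphism above, I expect to obtain
$$\overline{\xi}_i^Q(M,N) = \sum_{m=g-i}^{g} (-1)^{m-(g-i)} \length\bigl(\tor_m^Q(E_Q(M),N)\bigr) = \chi_{g-i}^Q(E_Q(M),N),$$
where the second equality uses the automatic vanishing of $\tor_m^Q(E_Q(M),N)$ for $m > g$. The finite-length hypothesis $\length(\ext_Q^j(M,N)) < \infty$ for $j \le i$ transports under the isomorphism to $\length(\tor_m^Q(E_Q(M),N)) < \infty$ for $m \ge g-i$, exactly what is needed for $\chi_{g-i}^Q$ to be defined.

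Now, since $1 \le i \le g-1$, we have $g-i \ge 1$, so Theorem~\ref{lichtenbaumhochster} applies with index $g-i$. Part (1) immediately yields $\overline{\xi}_i^Q(M,N) = \chi_{g-i}^Q(E_Q(M),N) \ge 0$. For part (2), Hochster--Lichtenbaum says $\chi_{g-i}^Q(E_Q(M),N) = 0$ if and only if $\tor_m^Q(E_Q(M),N) = 0$ for all $m \ge g-i$; combined with the free vanishing for $m > g$ and the Ext-to-Tor isomorphism, this is equivalent to $\ext_Q^j(M,N) = 0$ for all $0 \le j \le i$.

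The main care required is the bookkeeping: one must check that the index ranges of Lemma~\ref{exttotor} actually cover every Ext term in $\overline{\xi}_i^Q$ (which is where the hypothesis $i \le g-1$ is used, rather than $i \le g$), and that the alternating signs in $\overline{\xi}_i^Q$ match those in $\chi_{g-i}^Q$ after reindexing via $m = g-j$. Once that identity is in hand, both conclusions are direct consequences of Theorem~\ref{lichtenbaumhochster}.
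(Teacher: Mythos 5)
Your proposal is correct and follows essentially the same route as the paper: convert $\ext_Q^j(M,N)$ to $\tor_{g-j}^Q(E_Q(M),N)$ via Lemma~\ref{exttotor}, identify $\overline{\xi}_i^Q(M,N)$ with $\chi_{g-i}^Q(E_Q(M),N)$, and invoke Theorem~\ref{lichtenbaumhochster} with index $g-i\geq 1$. In fact you are slightly more careful than the paper in noting explicitly that $\tor_m^Q(E_Q(M),N)=0$ for $m>g$, which is what makes the finite sum defining $\overline{\xi}_i^Q$ agree with the a priori infinite sum defining $\chi_{g-i}^Q$.
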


\begin{proof}
Let $g=\grade{M}$. Then $\ext_{Q}^{j}(M,N)=\tor_{g-j}^{Q}(E_{Q}(M),N)$ for $0\leq j\leq g-1$ by Lemma \ref{exttotor}, so by Theorem \ref{lichtenbaumhochster} due to Hochster and Lichtenbaum we have 
$$\overline{\xi}_{i}^{Q}(M,N)=\chi_{g-i}^{Q}(E_{Q}(M),N)\geq 0.$$ 
Also, if $\overline{\xi}_{i}^{Q}(M,N)=0$, then $\chi_{g-i}^{Q}(E_{Q}(M),N)=0$. Therefore 

$$\ext_{Q}^{j}(M,N)=\tor_{g-j}^{Q}(E_{Q}(M),N)=0$$
for $j\leq i$ again using Theorem \ref{lichtenbaumhochster}.
\end{proof}

\begin{lemma} \label{analogofDAO} Let $R$ be an unramified hypersurface and let $M$ and $N$ be finitely generated $R$ modules. Let $i$ be an integer so that $i\leq\grade{M}$ and assume $\length(\ext_{R}^{j}(M,N))<\infty$ for $j\leq i$. If $\ext_{R}^{i}(M,N)=0$, then $\ext_{R}^{j}(M,N)=0$ for $j\leq i$.
\end{lemma}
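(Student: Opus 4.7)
The plan is to reduce to the complete case $R = Q/(x)$ with $Q$ unramified regular local, compare $\ext_R^*(M,N)$ and $\ext_Q^*(M,N)$ via the Cartan--Eilenberg change of rings long exact sequence, and then invoke Lemma \ref{analogofLH} over $Q$. Since $\mathfrak{m}$-adic completion is faithfully flat and preserves grade, length, and Ext for finitely generated modules, we may assume $R = Q/(x)$ with $Q$ unramified regular local and $x \in \mathfrak{n}^2$ a nonzerodivisor. The case $i = 0$ is trivial, so assume $i \ge 1$. Writing $g = \grade M$, Rees' formula gives $\grade_Q M = g + 1$, so $1 \le i \le \grade_Q M - 1$, which is the range in which Lemma \ref{analogofLH} applies over $Q$.

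Next I would use the Cartan--Eilenberg change of rings spectral sequence
$$E_2^{p,q} = \ext_R^p(M, \ext_Q^q(R,N)) \Rightarrow \ext_Q^{p+q}(M,N).$$
Because $xN = 0$, the only nonzero rows are $q = 0, 1$ (both equal to $\ext_R^p(M,N)$), so the sequence collapses into the long exact sequence
$$\cdots \to \ext_R^{n-2}(M,N) \xrightarrow{\chi} \ext_R^n(M,N) \to \ext_Q^n(M,N) \to \ext_R^{n-1}(M,N) \xrightarrow{\chi} \ext_R^{n+1}(M,N) \to \cdots,$$
where $\chi$ is the $d_2$-differential. For each $j \le i$, this LES exhibits $\ext_Q^j(M,N)$ as an extension of a quotient of $\ext_R^j(M,N)$ by a submodule of $\ext_R^{j-1}(M,N)$, both of finite length, so $\length \ext_Q^j(M,N) < \infty$.

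Taking the alternating sum of lengths across the LES, the intermediate $\ext_R$ contributions telescope and one obtains the identity
$$\overline{\xi}_i^Q(M,N) = \length \ext_R^i(M,N) - \length\,\mathrm{im}\bigl(\chi \colon \ext_R^{i-1}(M,N) \to \ext_R^{i+1}(M,N)\bigr).$$
The image on the right is a quotient of the finite-length module $\ext_R^{i-1}(M,N)$, so it has finite length even though $\ext_R^{i+1}(M,N)$ itself need not. Since $\ext_R^i(M,N) = 0$, this forces $\overline{\xi}_i^Q(M,N) \le 0$, while Lemma \ref{analogofLH}(1) gives $\overline{\xi}_i^Q(M,N) \ge 0$; hence $\overline{\xi}_i^Q(M,N) = 0$, and Lemma \ref{analogofLH}(2) yields $\ext_Q^j(M,N) = 0$ for all $j \le i$. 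A short induction on $j$ using the LES — at each step $\ext_Q^j(M,N) = 0$ forces $\ext_R^j(M,N) = \mathrm{im}(\chi \colon \ext_R^{j-2}(M,N) \to \ext_R^j(M,N))$, which vanishes by inductive hypothesis (with base cases $\ext_R^{-1} = \ext_R^{-2} = 0$) — then gives $\ext_R^j(M,N) = 0$ for all $j \le i$. The main obstacle will be the telescoping Euler-characteristic computation, where one must track signs carefully and exploit the fact that $\mathrm{im}(\chi)$ has finite length even when its ambient $\ext_R^{i+1}$ term does not.
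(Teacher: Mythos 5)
Your proof is correct and follows essentially the same route as the paper: reduce to the complete case $R=Q/(x)$, use the change-of-rings long exact sequence relating $\ext_R$ and $\ext_Q$ (which the paper writes down directly rather than deriving from the spectral sequence), identify the Euler-characteristic identity $\overline{\xi}_i^Q(M,N)=\length\ext_R^i(M,N)-\length(\mathrm{im}\,\chi)$ — your $\mathrm{im}(\chi)$ is exactly the paper's $\mathrm{coker}(\phi)$ — and conclude via Lemma \ref{analogofLH}. The only difference is that you spell out the final descent from $\ext_Q^j=0$ to $\ext_R^j=0$ more explicitly than the paper does.
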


\begin{proof}
Note that the completion of an unramified hypersurface is an unramified hypersurface. Also, $\hat{R}$ is a faithfully flat extension of $R$, so by the Cohen structure theorem and \cite[Theorem 21.1]{matsumura} we can assume $R=Q/x$ for some unramified regular local ring $(Q,\mathfrak{n},k)$ and some nonzerodivisor $x\in \mathfrak{n}^{2}$.\newline

We can also assume that $i>0$. Suppose $\ext_{R}^{i}(M,N)=0$ and consider the following long exact sequence:

\begin{tikzcd}[cells={text width={width("$\ext_{R}^{-1}(M,N) \rightarrow\dots$")},align=center},
column sep=2em]
 0 \rightarrow \ext_{R}^{0}(M,N) \arrow{r}  & \ext_{Q}^{0}(M,N) \arrow{r} \arrow[d, phantom, ""{coordinate, name=Z}]  & \ext_{R}^{-1}(M,N) \rightarrow\dots \arrow[dll,rounded corners,to path={ -- ([xshift=2ex]\tikztostart.east)|- (Z) [near end]\tikztonodes-| ([xshift=-2ex]\tikztotarget.west)-- (\tikztotarget)}]    \\
   \ext_{R}^{i-1}(M,N) \arrow{r} & \ext_{Q}^{i-1}(M,N) \arrow{r} \arrow[d, phantom, ""{coordinate, name=Z}]  & \ext_{R}^{i-2}(M,N) \arrow[dll,rounded corners,to path={ -- ([xshift=2ex]\tikztostart.east)|- (Z) [near end]\tikztonodes-| ([xshift=-2ex]\tikztotarget.west)-- (\tikztotarget)}] \\
 \ext_{R}^{i}(M,N) \arrow{r}  & \ext_{Q}^{i}(M,N) \arrow{r}{\phi} \arrow[d, phantom, ""{coordinate, name=Z}]   & \ext_{R}^{i-1}(M,N) \arrow[dll,rounded corners,to path={ -- ([xshift=2ex]\tikztostart.east)|- (Z) [near end]\tikztonodes-| ([xshift=-2ex]\tikztotarget.west)-- (\tikztotarget)}] \\
 \text{coker}(\phi) \arrow{r} & 0.
\end{tikzcd}

It follows that 
$$\overline{\xi}_{i}^{Q}(M,N)+\length(\text{coker}(\phi))=\length(\ext_{R}^{i}(M,N)).$$
Let $g=\grade_{R}M$. Then Lemma 2 in section 18 of \cite{matsumura} implies $g=\grade_{Q}M-1$, so we have $1\leq i\leq g<\grade_{Q}M$. Therefore by Lemma \ref{analogofLH} we have $\overline{\xi}_{i}^{Q}(M,N)\geq 0$, so
$$0\leq \overline{\xi}_{i}^{Q}(M,N)+\length(\text{coker}(\phi))=\length(\ext_{R}^{i}(M,N))=0.$$
This gives us $\ext_{R}^{j}(M,N)=0$ for $j\leq i$ by Lemma \ref{analogofLH} again.
\end{proof}

We can now prove the main theorem. The proof of this theorem is similar to the proof of Proposition 2.8 in \cite{dao}.
\begin{theorem} \label{maintheorem} Let $R$ be a regular local ring or an unramified hypersurface and let $M$ be a finitely generated $R$ module. Then $M$ is Ext rigid.
\end{theorem}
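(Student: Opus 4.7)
The strategy is to reduce the general case to the finite-length setting already handled by Lemma \ref{analogofDAO}, through a localization argument in the spirit of Dao's proof of Proposition 2.8 in \cite{dao}. First I would pass to the completion: since $\hat R$ is faithfully flat over $R$, preserves $\grade_R M$, detects vanishing of Ext modules, and inherits the hypothesis on $R$, we may assume $R = Q/(x)$ where $(Q,\mathfrak{n},k)$ is an unramified regular local ring and either $x = 0$ or $x \in \mathfrak{n}^2$ is a nonzerodivisor.

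Next I would argue by contradiction. Assume $\ext_R^n(M,N) = 0$ for some $n \leq \grade M$, yet $\ext_R^{j_0}(M,N) \neq 0$ for some $j_0 < n$. Set $\mathcal{V} = \bigcup_{j < n} \supp \ext_R^j(M,N)$, which is nonempty, and pick $\mathfrak{p} \in \mathcal{V}$ minimal under inclusion. After localizing at $\mathfrak{p}$, minimality forces each $\ext_{R_\mathfrak{p}}^j(M_\mathfrak{p}, N_\mathfrak{p})$ with $j < n$ to be supported only at $\mathfrak{p} R_\mathfrak{p}$, hence to have finite length; the module $\ext_{R_\mathfrak{p}}^n(M_\mathfrak{p}, N_\mathfrak{p})$ vanishes. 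Since vanishing of $\ext^i(M,R)$ passes to localizations, $\grade_{R_\mathfrak{p}} M_\mathfrak{p} \geq \grade_R M \geq n$.

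The key technical step is to verify that $R_\mathfrak{p}$ remains a regular local ring or an unramified hypersurface, so that the hypotheses of Lemma \ref{analogofDAO} (or its analog in the regular case) apply after localization. Writing $\mathfrak{p}'$ for the preimage of $\mathfrak{p}$ in $Q$, we have $R_\mathfrak{p} = Q_{\mathfrak{p}'}/(x)$, and $Q_{\mathfrak{p}'}$ is still an unramified regular local ring: the mixed-characteristic case reduces to the fact that regularity of $Q/pQ$ passes to localizations. Thus $R_\mathfrak{p}$ is regular or an unramified hypersurface according as $x \notin (\mathfrak{p}')^2 Q_{\mathfrak{p}'}$ or not. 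Once this is established, applying the finite-length rigidity statement to $R_\mathfrak{p}$ with $i = n$ yields $\ext_{R_\mathfrak{p}}^j(M_\mathfrak{p}, N_\mathfrak{p}) = 0$ for all $j \leq n$, contradicting $\mathfrak{p} \in \mathcal{V}$. The main obstacle I anticipate is precisely this structural verification that the class of rings under consideration is stable under localization; the localization-to-finite-length reduction itself is a standard maneuver in rigidity arguments.
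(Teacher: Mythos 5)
Your treatment of the hypersurface case is essentially the paper's argument: complete $R$, reduce to the finite-length situation by localizing, and invoke Lemma \ref{analogofDAO}. The only difference is organizational --- the paper runs an induction on $\dim R$ (so that the inductive hypothesis kills $\ext^j_{R_p}(M_p,N_p)$ for every nonmaximal $p$, giving finite length at the maximal ideal), whereas you choose a prime minimal in $\bigcup_{j<n}\supp\ext_R^j(M,N)$ and apply the finite-length lemma there. These are interchangeable, and the "key technical step" you isolate (stability of the ring class under localization) is exactly what the paper handles by citing Auslander's Lemma 3.4.

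There is, however, a genuine gap in how you dispose of the regular case --- both when $R$ itself is regular and when a localization $R_\mathfrak{p}=Q_{\mathfrak{p}'}/(x)$ turns out to be regular. You wave at "an analog of Lemma \ref{analogofDAO} in the regular case," but no such analog falls out of the Euler-characteristic machinery at the boundary value $n=\grade M$, which the theorem explicitly allows. Lemma \ref{analogofLH} requires $i\leq\grade M-1$, because the identification $\ext^j_Q(M,N)\cong\tor^Q_{g-j}(E_Q(M),N)$ of Lemma \ref{exttotor} only holds for $j\leq g-1$: at $j=g$ the module $\ext^g_Q(M,N)$ is merely a submodule of $E_Q(M)\otimes N$, so $\ext^g_Q(M,N)=0$ gives no control over $\tor_0$, and $\overline{\xi}^Q_{g-1}(M,N)\geq 0$ alone does not force vanishing. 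The hypersurface case dodges this precisely because $\grade_RM=\grade_QM-1$, so $i\leq\grade_RM$ puts you strictly below $\grade_QM$ when you pass to $Q$; the regular case has no such shift. The paper closes this hole with a different tool, Jothilingam's lemma: if $\ext_R^n(M,N)=0$ then $\ext_R^{n-1}(M,R)\otimes_RN\cong\ext_R^{n-1}(M,N)$, and since $n-1<\grade M$ the left side is zero, so one descends step by step with no finite-length hypothesis at all. You would need to import that (or something equivalent) both for the case $R$ regular and for any $\mathfrak{p}$ at which $R_\mathfrak{p}$ becomes regular; once it is in hand, the rest of your argument goes through.
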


\begin{proof}
Let $N$ be a finitely generated $R$ module. The theorem follows from a result of Jothilingam in the regular case: when $R$ is a regular local ring, if $\ext_{R}^{n}(M,N)=0$ for some $n\leq\grade{M}$, the Lemma in \cite{jothilingam} tells us that
$$\ext_{R}^{n-1}(M,R)\otimes_{R}N\cong\ext_{R}^{n-1}(M,N).$$
Since $n-1<\grade{M}$, we have $\ext_{R}^{n-1}(M,R)=0$, so $\ext_{R}^{n-1}(M,N)=0$.\newline

Now assume that $R$ is an unramified hypersurface. We can assume $R$ is complete as in the previous proof, and hence by \cite[Lemma 3.4]{auslander} $R_{p}$ is an unramified hypersurface for all primes $p$ of $R$. Also, since 
$$\grade{M}=\inf\{i|\ext_{R}^{i}(M,R)\neq 0\},$$
we have $\grade{M}\leq\grade_{R_{p}}{M_{p}}$ for all $p\in\supp{M}$.\newline

We proceed by induction on $\dim{R}$. Since $\grade{M}\leq\depth{R}=\dim{R}$, there is nothing to show when $\dim{R}=0$, so assume $\dim{R}>0$ and suppose $\ext_{R}^{i}(M,N)=0$ for some $i\leq\grade{M}$.\newline

Let $p$ be a nonmaximal prime in $R$. If $M_{p}=0$, then $\ext_{R_{p}}^{j}(M_{p},N_{p})=0$ for all $j$. If $M_{p}\neq 0$, then induction tells us that $\ext_{R_{p}}^{j}(M_{p},N_{p})=0$ for $j\leq i$ because $\ext_{R_{p}}^{i}(M_{p},N_{p})=0$, $i\leq\grade{M}\leq\grade_{R_{p}}{M_{p}}$, and $\dim{R_{p}}<\dim{R}$. This gives us $\ext_{R_{p}}^{j}(M_{p},N_{p})=0$ for $j\leq i$ and all nonmaximal primes $p$, and so $\length(\ext_{R}^{j}(M,N))<\infty$ for $j\leq i$. Since $\ext_{R}^{i}(M,N)=0$, Lemma \ref{analogofDAO} tells us that $\ext_{R}^{j}(M,N)=0$ for $j\leq i$.
\end{proof}

\begin{corollary} \label{maincorollary} Let $R$ be an unramified hypersurface and let $M$ be a nonzero finitely generated $R$ module. Then $\ext_{R}^{i}(M,M)\neq 0$ for $0\leq i\leq\grade{M}$. In particular, the answer to Jorgensen's Question \ref{jorgensenquestion} is ``yes" for such $R$ and all CM $R$ modules $M$.
\end{corollary}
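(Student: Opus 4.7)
The plan is to derive the corollary as an almost immediate consequence of Theorem \ref{maintheorem} applied with $N = M$. The main observation is that $\ext_{R}^{0}(M,M) = \Hom_{R}(M,M)$ always contains the identity map when $M \neq 0$, and hence is nonzero. So the idea is to argue by contradiction: if $\ext_{R}^{n}(M,M) = 0$ for some $n$ with $0 \leq n \leq \grade{M}$, then Ext rigidity of $M$ (Theorem \ref{maintheorem}) forces $\ext_{R}^{i}(M,M) = 0$ for all $0 \leq i \leq n$, and in particular for $i = 0$, contradicting $\Hom_{R}(M,M) \neq 0$.

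For the ``in particular'' part regarding Jorgensen's Question \ref{jorgensenquestion}, I would use the fact recalled in the introduction that if $M$ is a nonzero finitely generated CM module of finite projective dimension over a Noetherian local ring $R$, then Ischebek's result gives $\grade{M} = \pdim_{R}{M}$. So assuming $R$ is an unramified hypersurface (which is a complete intersection) and $M$ is CM of finite projective dimension, suppose $\ext_{R}^{n}(M,M) = 0$ for some $n \geq 1$. By the first part of the corollary, we must have $n > \grade{M} = \pdim_{R}{M}$, which is exactly the conclusion $\pdim_{R}{M} < n$ that Jorgensen's question asks about.

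I do not anticipate any real obstacle here; the main theorem has already done the heavy lifting. The only thing to be slightly careful about is the $i = 0$ case in the corollary's statement, which is handled separately (or uniformly) by noting $\Hom_{R}(M,M) \supseteq R \cdot \mathrm{id}_{M} \neq 0$, and that $\grade{M} \geq 0$ so the range $0 \leq i \leq \grade{M}$ is nonempty. Everything else is a direct application of the Ext rigidity established in Theorem \ref{maintheorem}.
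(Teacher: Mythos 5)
Your proposal is correct and follows exactly the paper's argument: the paper's own proof also deduces the result from Theorem \ref{maintheorem} by noting that $\Hom_{R}(M,M)\neq 0$ and that $\grade{M}=\pdim_{R}M$ for CM modules of finite projective dimension. Your write-up just makes the contradiction and the reduction for Jorgensen's question explicit.
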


\begin{proof}
Since $\Hom_{R}(M,M)\neq 0$ and since $\pdim_{R}M=\grade{M}$ for CM $R$ modules with finite projective dimension, this follows from Theorem \ref{maintheorem}.
\end{proof}

As noted in the proof of Corollary \ref{maincorollary}, the main theorem gives the following:
\begin{quote}
Assume $R$ is a regular local ring or an unramified hypersurface and let $M$ be a finitely generated CM $R$ module with finite projective dimension. If $N$ is a finitely generated $R$ module and if $\ext_{R}^{n}(M,N)=0$ for some $n\leq\pdim_{R}M$, then $\ext_{R}^{i}(M,N)=0$ for $i\leq n$.
\end{quote}
One might hope that this property would hold after dropping the CM assumption, but the following example shows that this is not the case.

\begin{example} Let $Q=k[[x,y,z]]$ and let $M=Q/(x^{2},xy,xz)$. Then $\pdim_{Q}M=3$, $\ext_{Q}^{2}(M,Q)=0$, and $\ext_{Q}^{1}(M,Q)\neq 0$.
\end{example}

\end{document}